%------------------------------------------------------------------------------
% Which version of paper: Here please write first, second, ....
% Here please write the corresponding author and his/her e-mail.
% Here please write the date of submission of paper or its revisions.
%------------------------------------------------------------------------------
%
\documentclass[12pt, reqno]{amsart}
\usepackage{amsmath, amsthm, amscd, amsfonts, amssymb, graphicx, color}
\usepackage[bookmarksnumbered, colorlinks, plainpages]{hyperref}
\input{mathrsfs.sty}

\textheight 22.5truecm \textwidth 14.5truecm
\setlength{\oddsidemargin}{0.35in}\setlength{\evensidemargin}{0.35in}
\setlength{\topmargin}{-.5cm}

\newtheorem{theorem}{Theorem}[section]
\newtheorem{lemma}[theorem]{Lemma}
\newtheorem{proposition}[theorem]{Proposition}
\newtheorem{corollary}[theorem]{Corollary}
\theoremstyle{definition}

\theoremstyle{remark}
\newtheorem{remark}[theorem]{Remark}
\numberwithin{equation}{section}

\begin{document}

\title{Operator Aczel inequality}

\author[M.S. Moslehian]{Mohammad Sal Moslehian}

\address{Department of Pure Mathematics, Center of Excellence in
Analysis on Algebraic Structures (CEAAS), Ferdowsi University of
Mashhad, P. O. Box 1159, Mashhad 91775, Iran.}
\email{moslehian@ferdowsi.um.ac.ir and moslehian@ams.org}
\urladdr{\url{http://profsite.um.ac.ir/~moslehian/}}

\subjclass[2010]{Primary 47A63; Secondary 15A60, 46L05, 26D15.}

\keywords{Aczel inequality; operator geometric mean; operator concave; operator decreasing; positive linear functional; $C^*$-algebra.}

\begin{abstract}
We establish several operator versions of the classical Aczel inequality. One of operator versions deals with the weighted operator geometric mean and another is related to the positive sesquilinear forms. Some applications including the unital positive linear maps on $C^*$-algebras and the unitarily invariant norms on matrices are presented.
\end{abstract} \maketitle

%------------------------------------------------------------------------------%

\section{Introduction}
Let $\mathbb{B}(\mathscr{H})$ denote the algebra of all bounded linear operators acting on a complex Hilbert space $(\mathscr{H},\langle \cdot,\cdot\rangle)$ and $I$ is the identity operator. In the case where $\dim \mathscr{H} =n$, we identify $\mathbb{B}(\mathscr{H})$ with the full matrix algebra $M_n(\mathbb{C})$ of all $n \times n$ matrices with entries in the complex field $\mathbb{C}$. An operator $ A\in \mathbb{B}(\mathscr{H})$ is called positive (positive-semidefinite for matrices) if $\langle A\xi, \xi\rangle \geq 0$ holds for every $\xi\in \mathscr{H}$ and then we write $A\geq 0$. For $A,B \in\mathrm{B}(\mathcal{H})$, we say $A\leq B$ if $B-A\geq0$. Let $f$ be a continuous real valued function defined on an interval $J$. The function $f$ is called operator decreasing if $B\leq A$ implies $f(A)\leq f(B)$ for all $A, B $ with spectra in $J$. A function $f$ is said to be operator concave on $J$ if
$$\lambda f(A)+(1-\lambda)f(B) \leq f(\lambda A + (1-\lambda)B)$$
for all $A, B \in \mathbb{B}(\mathscr{H})$ with spectra in $J$ and all $\lambda \in [0,1]$.
\par By a $C^*$-algebra we mean a closed $*$-subalgebra $\mathscr{A}$ of $\mathbb{B}(\mathscr{H})$ for some Hilbert space $\mathscr{H}$. Any finite dimensional $C^*$-algebra is isometrically $*$-isomorphic to a direct sum of finitely many full matrix algebras. A $C^*$-algebra is called unital if it has an identity. A map taking the identity to identity is called unital. A map $\Phi: \mathscr{A} \to \mathscr{B}$ between $C^*$-algebras is called positive if it takes positive operators to positive ones, in particular, a positive linear map from $\mathscr{A}$ into $\mathbb{C}$ is called a positive linear functional. A map $\Phi: \mathscr{A} \to \mathscr{B}$ is called $2$-positive if the map $\Phi_2: M_2(\mathscr{A}) \to M_2(\mathscr{B})$ defined by $\Phi_2([a_{ij}])=[\Phi(a_{ij})]$ is positive, where $M_2(\mathscr{A})$ is the $C^*$-algebra all $2\times 2$ matrices with entries in $\mathscr{A}$. An operator $A$ is called a contraction if $\|A\|\leq 1$. We refer the reader to \cite{MUR} for undefined notions on operator theory and to \cite{F-M-P-S} for more information on
operator inequalities.

In 1956, Acz\'el \cite{ACZ} proved that if $a_i,b_i\,\,(1\leq i\leq n)$ are positive real numbers such that $a_1^2-\sum_{i=2}^na_i^2>0$ or $b_1^2-\sum_{i=2}^nb_i^2>0$, then $$\left(a_1b_1-\sum_{i=2}^na_ib_i\right)^2 \geq \left(a_1^2-\sum_{i=2}^na_i^2\right)\left(b_1^2-\sum_{i=2}^nb_i^2\right)\,.$$
The Acz\'el inequality has some applications in mathematical analysis and in the theory of functional equations in non-Euclidean geometry. During the last decades several interesting generalization of this significant inequality were obtained. Popoviciu \cite{POP} extended Acz\'el's inequality by showing that
$$\left(a_1b_1-\sum_{i=2}^na_ib_i\right)^p \geq \left(a_1^p-\sum_{i=2}^na_i^p\right)\left(b_1^p-\sum_{i=2}^nb_i^p\right)\,,$$ if $p\ge1$ and $a_1^p-\sum_{i=2}^na_i^p>0$ or $b_1^p-\sum_{i=2}^nb_i^p>0$. Acz\'el's inequality and Popoviciu's inequality was sharpened by Wu \cite{WU}, see also \cite{W-D}. A variant of Acz\'el's inequality in inner product spaces was given by Dragomir \cite{DRA1} by establishing that if $a$, $b$ are real numbers and $x$, $y$ are vectors of an inner product space such that $a^2-\|x\|^2>0$ or $b^2-\|y\|^2>0$, then $(a^2-\|x\|^2)(b^2-\|y\|^2)\leq (ab-\mbox{Re}\langle x,y\rangle)^2$, see also \cite{DRA2}. Cho, Mati\'c and Pe\v cari\'c \cite{C-M-P} generalized Acz\'el's inequality for linear isotonic functionals and convex functions. Several Acz\'el type inequalities involving norms in Banach spaces were presented by Mercer \cite{MER}. Also, Sun \cite{SUN} gave an Acz\'el--Chebyshev type inequality for positive linear functionals. To find operator versions of Hua's inequality (see \cite{MOS}),  Fujii \cite[Theorem 3]{FUJ} obtained an Acz\'el operator inequality by showing that if $\Phi: \mathscr{A} \to \mathscr{B}$ is a contractive $2$-positive unital linear map between unital $C^*$-algebras, $A, B \in \mathscr{A}$ are contraction and $\Phi(B^*A)$ is normal with the polar decomposition $\Phi(B^*A)=U|\Phi(B^*A)|$, then
$$|1-\Phi(B^*A)| \geq (1-\Phi(A^*A))\sharp U^*(1-\Phi(B^*B))U\,.$$
In this paper we establish several operator versions of the classical Aczel inequality. One of operator versions deals with the weighted operator geometric mean $A\sharp_{t}B:=A^{1/2}(A^{-1/2}BA^{-1/2})^tA^{1/2}\,\,(t \in [0,1])$ and another is related to the positive sesquilinear forms. Some applications including the unital positive linear maps on $C^*$-algebras and the unitarily invariant norms on matrices are presented. Recall that a unitarily invariant norm
$|||\cdot|||$ has the property $|||UXV|||=|||X|||$, where $U$ and $V$ are unitaries and $X \in M_n(\mathbb{C})$. For more information on the theory of
the unitarily invariant norms the reader is referred to \cite{BHA}.

%------------------------------------------------------------------------------%

\section{Operator Aczel inequality via geometric mean}

We start this section with a lemma about a parameterized operator power mean $m_t$ satisfying $Am_tB \leq (1-t)A+tB$ for any two positive invertible operators $A, B$. The power means $A\sharp_{r,t}B:=A^{1/2}(1-t+t(A^{-1/2}BA^{-1/2})^r)^{1/r}A^{1/2}\,\,(r\in[-1,1]\setminus\{0\})$ and  $A\sharp_{0,t}B:=A\sharp_{t}B\,\,(t \in [0,1])$ are such parameterized operator power means. Clearly if $AB=BA$, then $A\sharp_tB=A^{1-t}B^t$; see \cite[Chapter V]{F-M-P-S}.

\begin{lemma} \label{l1} Suppose that $m_t$ is a parameterized operator power mean not greater than the weighted arithmetic mean. If $J$ is an interval of $(0,\infty)$ and $f: J \to (0,\infty)$ is operator decreasing and operator concave on $J$ and $A, B \in \mathbb{B}(\mathscr{H})$ are positive invertible operators with spectra contained in $J$, then
\begin{eqnarray}\label{mt}
f(Am_tB)\geq f(A)m_tf(B)
\end{eqnarray}
\end{lemma}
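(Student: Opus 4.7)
The plan is to chain together three inequalities that all follow directly from the hypotheses, without needing any clever manipulation.

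First, I would use the assumption on $m_t$, namely that $A\,m_t\,B \leq (1-t)A+tB$, together with the hypothesis that $f$ is operator decreasing on $J$, to conclude that
\begin{equation*}
f(A\,m_t\,B) \geq f\bigl((1-t)A+tB\bigr).
\end{equation*}
One small point to verify here is that the spectrum of $A\,m_t\,B$ (and of the convex combination $(1-t)A+tB$) lies in $J$, so that the operator-monotonicity hypothesis is applicable; this should follow from $J$ being an interval of $(0,\infty)$ containing the spectra of $A$ and $B$ together with standard facts about operator means.

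Next, since $f$ is operator concave on $J$, applied to $A$ and $B$ with parameter $1-t$, I get
\begin{equation*}
f\bigl((1-t)A+tB\bigr) \geq (1-t)f(A)+tf(B).
\end{equation*}
Finally, applying the hypothesis on $m_t$ once more, this time to the positive invertible operators $f(A)$ and $f(B)$ (which are positive invertible because $f$ takes values in $(0,\infty)$ on $J$ and is continuous), gives
\begin{equation*}
(1-t)f(A)+tf(B) \geq f(A)\,m_t\,f(B).
\end{equation*}
Concatenating these three inequalities yields the desired conclusion $f(A\,m_t\,B)\geq f(A)\,m_t\,f(B)$.

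The proof is therefore essentially a three-line chain, and I do not anticipate a genuine obstacle. The only subtlety worth flagging is to confirm the spectral containment so that each operator functional calculus step is legal, and to note why $f(A)$, $f(B)$ lie in the domain where the bound $x\,m_t\,y \leq (1-t)x+ty$ was asserted; since the lemma states that $m_t$ is not greater than the weighted arithmetic mean as an inequality between positive invertible operators and $f$ maps $J \subset (0,\infty)$ into $(0,\infty)$, this causes no difficulty.
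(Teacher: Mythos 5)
Your proposal is correct and follows exactly the same three-step chain as the paper's proof: apply the operator-decreasing property to $Am_tB \leq (1-t)A+tB$, then operator concavity, then the defining bound of $m_t$ applied to $f(A)$ and $f(B)$. The spectral-containment remark you add is a reasonable extra care point but does not change the argument.
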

\begin{proof}
It follows from $Am_tB \leq (1-t)A+tB$ that
\begin{eqnarray}\label{1}
f(Am_tB)&\geq& f(1-t)A+tB)\qquad\qquad\qquad\mbox{(since $f$ is operator decreasing)}\nonumber\\
&\geq&(1-t)f(A)+tf(B)\qquad\mbox{(since $f$ is operator concave)}\\
&\geq& f(A)m_tf(B)\qquad\qquad\qquad\qquad\qquad\qquad(\mbox{by the property of~} m_t)\,.\nonumber
\end{eqnarray}
\end{proof}

%------------------------------------------------------------------------------%
\begin{theorem} \label{t1} Let $J$ be an interval of $(0,\infty)$, let $f: J \to (0,\infty)$ be operator decreasing and operator concave on $J$, $\frac{1}{p}+\frac{1}{q}=1$, $p, q > 1$ and let $A, B \in \mathbb{B}(\mathscr{H})$ be positive invertible operators with spectra contained in $J$. Then
\begin{eqnarray}\label{main0}
f(A^p\sharp_{1/q}B^q)\geq f(A^p)\sharp_{1/q}f(B^q)
\end{eqnarray}
\begin{eqnarray}\label{main1}
\langle f(A^p\sharp_{1/q}B^q)\xi,\xi\rangle \geq \langle f(A^p)\xi,\xi\rangle^{\frac{1}{p}}\langle f(B^q)\xi,\xi\rangle^{\frac{1}{q}}\,.
\end{eqnarray}
for any vector $\xi \in \mathscr{H}$.
\end{theorem}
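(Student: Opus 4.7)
The plan is to deduce (\ref{main0}) directly from Lemma~\ref{l1}, and to derive (\ref{main1}) from an intermediate estimate already present in the proof of that lemma, combined with the scalar Young inequality.

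First I would specialize Lemma~\ref{l1} by choosing the parameterized mean $m_t$ to be the weighted geometric mean $\sharp_{1/q}$, which corresponds to $\sharp_{0,1/q}$ in the power-mean family described just before the lemma. The classical operator arithmetic--geometric mean inequality $A\sharp_{1/q}B\leq\tfrac{1}{p}A+\tfrac{1}{q}B$ supplies the domination hypothesis required by the lemma. Since $A^p$ and $B^q$ are positive invertible with spectra contained in $J$ by assumption, applying Lemma~\ref{l1} to the pair $(A^p,B^q)$ yields (\ref{main0}) at once.

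For (\ref{main1}) I would revisit the chain of inequalities in the proof of Lemma~\ref{l1}. Its first two steps, specialized to our setting, give
\begin{equation*}
f(A^p\sharp_{1/q}B^q)\;\geq\; f\!\bigl(\tfrac{1}{p}A^p+\tfrac{1}{q}B^q\bigr)\;\geq\; \tfrac{1}{p}f(A^p)+\tfrac{1}{q}f(B^q).
\end{equation*}
Pairing both sides with $\xi$ yields
\begin{equation*}
\langle f(A^p\sharp_{1/q}B^q)\xi,\xi\rangle\;\geq\; \tfrac{1}{p}\langle f(A^p)\xi,\xi\rangle+\tfrac{1}{q}\langle f(B^q)\xi,\xi\rangle.
\end{equation*}
Applying the scalar Young inequality $\tfrac{a}{p}+\tfrac{b}{q}\geq a^{1/p}b^{1/q}$ to the nonnegative numbers $a=\langle f(A^p)\xi,\xi\rangle$ and $b=\langle f(B^q)\xi,\xi\rangle$ (nonnegativity holds since $f>0$ makes $f(A^p)$ and $f(B^q)$ positive operators) then delivers (\ref{main1}).

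I do not expect any real obstacle. The operator AM--GM sandwich embedded inside Lemma~\ref{l1} carries the structural content, and the scalar Young inequality performs the final conversion from the weighted arithmetic-mean bound to the weighted geometric-mean bound appearing in the sesquilinear-form version. The only point worth checking in passing is that $\sharp_{1/q}$ qualifies as an admissible $m_t$ for Lemma~\ref{l1}, which is precisely the operator AM--GM inequality.
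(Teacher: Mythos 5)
Your proposal is correct and follows essentially the same route as the paper: inequality \eqref{main0} is obtained by specializing Lemma \ref{l1} to $m_t=\sharp_{1/q}$ applied to the pair $(A^p,B^q)$, and inequality \eqref{main1} is obtained from the intermediate arithmetic-mean estimate \eqref{1} in the lemma's proof followed by the scalar weighted arithmetic--geometric mean (Young) inequality. No gaps; your explicit check that $\sharp_{1/q}$ is an admissible $m_t$ via the operator AM--GM inequality is a point the paper leaves implicit.
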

\begin{proof}
Lemma \ref{l1} yields inequality \eqref{main1}.

\noindent Let $\xi \in\mathscr{H}$ be an arbitrary vector. It follows from \eqref{1} that
\begin{eqnarray*}
\langle f(A^p\sharp_{1/q}B^q)\xi,\xi\rangle &\geq& \frac{1}{p}\langle f(A^p)\xi,\xi\rangle +\frac{1}{q}\langle f(B^q)\xi,\xi\rangle\\
&\geq& \langle f(A^p)\xi,\xi\rangle^{\frac{1}{p}}\langle f(B^q)\xi,\xi\rangle^{\frac{1}{q}}\\
&&\quad\quad\quad(\mbox{weighted arithmetic-geometric mean inequality}).
\end{eqnarray*}
\end{proof}
%------------------------------------------------------------------------------%
\begin{remark}
The H\"older--McCarthy inequality asserts that if $C \in \mathbb{B}(\mathscr{H})$ is a positive operator, then
$\langle C^r\xi,\xi\rangle \leq \langle C\xi,\xi\rangle^r$ for all $0 < r<1$ and all unit vectors $\xi\in\mathscr{H}$; cf.  \cite[Theorem 1.4]{F-M-P-S}. It follows from \eqref{main1} that
$$\langle f(A^p\sharp_{1/q}B^q)\xi,\xi\rangle \geq \langle f(A^p)^{\frac{1}{p}}\xi,\xi\rangle\langle f(B^q)^{\frac{1}{q}}\xi,\xi\rangle\,.$$
Thus
$$\|f(A^p\sharp_{1/q}B^q)^{1/2}\xi\| \geq \|f(A^p)^{\frac{1}{2p}}\xi\|\,\|f(B^q)^{\frac{1}{2q}}\xi\|\,.$$
\end{remark}

%------------------------------------------------------------------------------%
\begin{corollary} \label{c1} Let $\frac{1}{p}+\frac{1}{q}=1$, $p, q > 1$ and $A, B \in \mathbb{B}(\mathscr{H})$ be commuting positive invertible operators with spectra contained in $(0,1)$. Then
\begin{eqnarray*}
1-\|AB\xi\|^2 \geq \langle \left(1-\|A^{p/2}\xi\|^2\right)^{\frac{1}{p}}\left(1-\|B^{q/2}\|^2\right)^{\frac{1}{q}}\,.
\end{eqnarray*}
for any unit vector $\xi \in \mathscr{H}$.\\
\end{corollary}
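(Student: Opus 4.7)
The plan is to specialize Theorem~\ref{t1} to the function $f(t) = 1-t$ on the interval $J = (0,1)$. First I would check the hypotheses: since $f$ is affine it is automatically operator concave, while operator decreasingness is immediate from the fact that negation reverses the operator order. Since $A$ and $B$ have spectra in $(0,1)$, so do $A^p$ and $B^q$, and $f$ maps $(0,1)$ into $(0,1) \subset (0,\infty)$, so all hypotheses of Theorem~\ref{t1} are met.

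Applying inequality~\eqref{main1} with this $f$, and evaluating each scalar factor via $\langle(I-C)\xi,\xi\rangle = 1 - \langle C\xi,\xi\rangle$ together with $\langle A^p\xi,\xi\rangle = \|A^{p/2}\xi\|^2$ (valid for positive $A$ and unit $\xi$), the right-hand side of~\eqref{main1} becomes precisely the right-hand side of the claim. For the left-hand side I would exploit commutativity: since $A$ and $B$ commute and are positive invertible, the weighted geometric mean reduces to a product of powers, so
\begin{equation*}
A^p \sharp_{1/q} B^q = (A^p)^{1-1/q}(B^q)^{1/q} = A^{p/p}B^{q/q} = AB,
\end{equation*}
where I used the conjugate-exponent identity $1 - 1/q = 1/p$. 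Thus~\eqref{main1} reads
\begin{equation*}
1 - \langle AB\xi,\xi\rangle \;\geq\; \bigl(1 - \|A^{p/2}\xi\|^2\bigr)^{1/p}\bigl(1 - \|B^{q/2}\xi\|^2\bigr)^{1/q}.
\end{equation*}

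To finish I would strengthen the left-hand side from $1 - \langle AB\xi,\xi\rangle$ to $1 - \|AB\xi\|^2$. Since $A$ and $B$ are commuting positive contractions, so is $T := AB$; as $T(I-T)$ is a product of commuting positive operators, it is positive, giving $T^2 \leq T$. Evaluating at the unit vector $\xi$ yields $\|AB\xi\|^2 = \langle T^2\xi,\xi\rangle \leq \langle T\xi,\xi\rangle$, hence $1 - \|AB\xi\|^2 \geq 1 - \langle AB\xi,\xi\rangle$, and combining with the previous display gives the corollary. I do not foresee a serious obstacle: the proof is essentially a one-shot application of Theorem~\ref{t1}. The only genuine bookkeeping is the identification $A^p \sharp_{1/q} B^q = AB$ via commutativity, together with the short monotonicity step used to upgrade the inner product $\langle AB\xi,\xi\rangle$ to the squared norm $\|AB\xi\|^2$.
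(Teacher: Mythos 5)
Your proof is correct and follows the paper's own route: apply Theorem~\ref{t1} (inequality~\eqref{main1}) with $f(t)=1-t$ on $(0,1)$ and use commutativity to identify $A^p\sharp_{1/q}B^q=AB$. Your final step upgrading $1-\langle AB\xi,\xi\rangle$ to $1-\|AB\xi\|^2$ via $T^2\le T$ for the positive contraction $T=AB$ is a genuine (and needed) addition, since the paper's one-line proof silently passes over the mismatch between what Theorem~\ref{t1} directly yields and the stated left-hand side $1-\|AB\xi\|^2$ (the corollary as printed also has typographical slips: a stray $\langle$ and a missing $\xi$ in $\|B^{q/2}\|^2$).
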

\begin{proof}
Apply Theorem \ref{t1} to the function $f(t)=1-t$ on $(0,1)$ and note that $A^p\sharp_{1/q}B^q=AB$.
\end{proof}

%------------------------------------------------------------------------------%
\begin{corollary} \label{c4} Let $J$ be an interval of $(0,\infty)$, let $f: J \to (0,\infty)$ be operator decreasing and operator concave on $J$ and $A, B \in \mathbb{B}(\mathscr{H})$ be positive invertible commuting operators with spectra contained in $J$. Then
\begin{eqnarray*}
f(AB) \geq \left(f(A^p)\right)^{1/p}\left(f(B^q)\right)^{1/q}\,.
\end{eqnarray*}
\end{corollary}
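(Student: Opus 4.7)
The plan is to derive Corollary \ref{c4} as an immediate specialization of inequality \eqref{main0} in Theorem \ref{t1}, using the commutativity hypothesis to simplify both sides. I would assume (as the use of $p,q$ tacitly requires) that $\tfrac1p+\tfrac1q=1$ with $p,q>1$, so that Theorem \ref{t1} applies.

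First, I would verify the simplification of the left-hand side. Since $A$ and $B$ are positive, invertible and commute, standard functional calculus gives that $A^p$ and $B^q$ also commute. For commuting positive invertible operators $X,Y$, the weighted geometric mean collapses to a product of powers:
\begin{equation*}
X\sharp_{t}Y = X^{1/2}(X^{-1/2}YX^{-1/2})^{t}X^{1/2} = X^{1-t}Y^{t}.
\end{equation*}
Applying this with $X=A^p$, $Y=B^q$, $t=1/q$, and using $1-\tfrac1q=\tfrac1p$, yields
\begin{equation*}
A^p\sharp_{1/q}B^q = (A^p)^{1/p}(B^q)^{1/q} = AB,
\end{equation*}
so the left-hand side of \eqref{main0} becomes $f(AB)$.

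Next I would simplify the right-hand side. Because $A^p$ and $B^q$ commute, the spectral theorem (or continuous functional calculus applied in the commutative $C^*$-algebra generated by $A$, $B$, and $I$) shows that $f(A^p)$ and $f(B^q)$ also commute. Applying the same commuting-case identity for the weighted geometric mean to the positive invertible operators $f(A^p)$ and $f(B^q)$ gives
\begin{equation*}
f(A^p)\sharp_{1/q}f(B^q) = \bigl(f(A^p)\bigr)^{1/p}\bigl(f(B^q)\bigr)^{1/q}.
\end{equation*}
Substituting both reductions into \eqref{main0} yields the desired inequality.

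There is no real obstacle here; the only point that requires a moment of care is to record that commutativity of $A$ and $B$ propagates through both the power map and the functional calculus of $f$, so that the weighted geometric means on both sides of \eqref{main0} legitimately collapse to products of powers. Once this is noted, the corollary is a one-line consequence of Theorem \ref{t1}.
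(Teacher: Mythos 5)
Your proof is correct and follows exactly the route the paper intends (the paper leaves Corollary \ref{c4} without an explicit proof, but it is the immediate specialization of inequality \eqref{main0} using the commuting-case identity $X\sharp_t Y = X^{1-t}Y^t$ already noted in Section 2). Your care in observing that commutativity propagates to $A^p, B^q$ and then to $f(A^p), f(B^q)$, and that the hypothesis $\tfrac1p+\tfrac1q=1$ with $p,q>1$ must be read into the statement, fills in precisely what the paper omits.
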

%------------------------------------------------------------------------------%
\begin{corollary} \label{c2} If $f$ is a decreasing concave function on an interval $J$ and $a_i,b_i\,\,(1\leq i\leq n)$ are positive numbers in $J$, then
$$\sum_{i=1}^nf(a_ib_i) \geq \left(\sum_{i=1}^nf(a_i^p)\right)^{1/p}\left(\sum_{i=1}^nf(b_i^q)\right)^{1/q}\,.$$
\end{corollary}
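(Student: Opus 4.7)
The plan is to mimic the proof of Theorem~\ref{t1}\eqref{main1} at the scalar level, apply it pointwise, and then aggregate with the weighted arithmetic--geometric mean inequality. One might hope to deduce this corollary by a direct substitution of diagonal matrices into Theorem~\ref{t1}, but Theorem~\ref{t1} is stated for operator decreasing and operator concave $f$, whereas here only the classical scalar analogues are assumed. Fortunately, all the operator manipulations inside the proof of Lemma~\ref{l1} reduce to scalar inequalities once we pass to one-dimensional spectral components, so nothing is lost.

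First, I would establish the scalar pointwise inequality
\begin{equation*}
f(a_i b_i)\;\geq\;\frac{1}{p}f(a_i^p)+\frac{1}{q}f(b_i^q)
\qquad(1\leq i\leq n).
\end{equation*}
This is the scalar trace of the chain \eqref{1}: by Young's inequality $a_ib_i \leq \tfrac{1}{p}a_i^p+\tfrac{1}{q}b_i^q$, the decreasing property of $f$ yields $f(a_ib_i)\geq f\bigl(\tfrac{1}{p}a_i^p+\tfrac{1}{q}b_i^q\bigr)$, and the concavity of $f$ yields $f\bigl(\tfrac{1}{p}a_i^p+\tfrac{1}{q}b_i^q\bigr)\geq\tfrac{1}{p}f(a_i^p)+\tfrac{1}{q}f(b_i^q)$.

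Next, I would sum this pointwise estimate over $i=1,\dots,n$ to obtain
\begin{equation*}
\sum_{i=1}^{n}f(a_ib_i)\;\geq\;\frac{1}{p}\sum_{i=1}^{n}f(a_i^p)+\frac{1}{q}\sum_{i=1}^{n}f(b_i^q),
\end{equation*}
and finally apply the weighted arithmetic--geometric mean inequality to the nonnegative quantities $X=\sum_i f(a_i^p)$ and $Y=\sum_i f(b_i^q)$, which gives $\tfrac{1}{p}X+\tfrac{1}{q}Y\geq X^{1/p}Y^{1/q}$. Chaining these two inequalities yields the desired bound. There is no real obstacle here; the only conceptual point is not to try to conclude by Hölder applied term-by-term (which would point in the wrong direction), but rather to sum the additive pointwise bound first and invoke AM--GM only at the very end, exactly as in the proof of \eqref{main1}.
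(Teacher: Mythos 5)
Your proof is correct, and the chain of inequalities is the same one the paper uses: Young's inequality in place of the operator bound $Am_tB\leq(1-t)A+tB$, then the decreasing property, then concavity, then summation, then weighted AM--GM. The paper's own proof is the one-line instruction to apply \eqref{main1} to the diagonal operators $A=\mathrm{diag}(a_1,\dots,a_n)$, $B=\mathrm{diag}(b_1,\dots,b_n)$ on $\mathbb{C}^n$ with $\xi=(1,\dots,1)$, for which $A^p\sharp_{1/q}B^q=AB$ and the three inner products become the three sums in the statement. The substantive difference is the one you flag yourself: a literal invocation of Theorem~\ref{t1} requires $f$ to be \emph{operator} decreasing and \emph{operator} concave, whereas the corollary assumes only the scalar properties; your scalar rewriting of the argument from Lemma~\ref{l1} shows that for commuting (here diagonal) operators the scalar hypotheses suffice, so your version actually closes a small gap in the paper's derivation rather than merely restating it. One point worth making explicit in either version: the hypotheses should guarantee that $a_i^p$, $b_i^q$, $a_ib_i$ and $\tfrac{1}{p}a_i^p+\tfrac{1}{q}b_i^q$ all lie in $J$ and that $f\geq 0$ there (as in Theorem~\ref{t1}, where $f$ maps into $(0,\infty)$), since the final AM--GM step needs $\sum_i f(a_i^p)$ and $\sum_i f(b_i^q)$ to be nonnegative; you implicitly assume this, as does the paper.
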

Apply \eqref{main1} to the positive operators $A(x_1, \cdots,x_n)=(a_1x_1, \cdots, a_nx_n)$ and $B(x_1, \cdots,x_n)=(b_1x_1, \cdots, b_nx_n)$ acting on the Hilbert space $\mathscr{H}=\mathbb{C}^n$ and $\xi=(1, 1, \cdots, 1)$.

%------------------------------------------------------------------------------%

\section{Aczel's inequality via positive linear functionals}

In this section we present a new version of Aczel's inequality through positive linear functionals. The first result is a generalization of the main result of \cite{DRA1}. The proof differs from that the main result of \cite{DRA1}. It is an Acz\'el type inequality for sesquilinear forms.

\begin{theorem}\label{th2}
Let $\phi(.,.)$ be a positive sesquilinear form on a linear space $\mathscr{X}$, let $x, y \in \mathscr{X}$ such that $\phi(x,x) \leq M_1^2$ or $\phi(y,y) \leq M_2^2$ for some positive numbers $M_1, M_2$ and let $L: \mathbb{C} \to \mathbb{R}$ be a function fulfilling $L(z)\leq |z|$ for all $z\in \mathbb{C}$. Then
\begin{eqnarray*}
\big(M_1M_2-L(\phi(x,y))\big)^2 \geq \big(M_1^2-\phi(x,x)\big)\big(M_2^2-\phi(y,y)\big)\,.
\end{eqnarray*}
\end{theorem}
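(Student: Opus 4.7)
My plan is to reduce the inequality to the classical numerical Acz\'el inequality via the Cauchy--Schwarz inequality for positive sesquilinear forms. First I would dispose of the degenerate case where the right hand side is non-positive: since $L$ takes values in $\mathbb{R}$, the left hand side is a square of a real number and hence non-negative, so whenever $(M_1^2-\phi(x,x))(M_2^2-\phi(y,y))\le 0$ the inequality holds trivially. The hypothesis forces at least one of $\phi(x,x)\le M_1^2$, $\phi(y,y)\le M_2^2$ to hold, so the only interesting case is when both of these inequalities hold simultaneously; I would concentrate on this case.

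In that case the Cauchy--Schwarz inequality for positive sesquilinear forms gives
$$|\phi(x,y)|\le \sqrt{\phi(x,x)\,\phi(y,y)}\le M_1 M_2.$$
Combining this with the standing assumption $L(z)\le |z|$ yields $L(\phi(x,y))\le \sqrt{\phi(x,x)\phi(y,y)}\le M_1M_2$, so
$$M_1M_2-L(\phi(x,y))\ \ge\ M_1M_2-\sqrt{\phi(x,x)\phi(y,y)}\ \ge\ 0.$$
Squaring this chain of non-negative reals reduces matters to verifying
$$\bigl(M_1 M_2-\sqrt{\phi(x,x)\phi(y,y)}\bigr)^2\ \ge\ (M_1^2-\phi(x,x))(M_2^2-\phi(y,y)).$$

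Expanding both sides the $M_1^2M_2^2$ and $\phi(x,x)\phi(y,y)$ terms cancel and what is left is
$$M_1^2\,\phi(y,y)+M_2^2\,\phi(x,x)\ \ge\ 2M_1 M_2\sqrt{\phi(x,x)\phi(y,y)},$$
which is precisely the numerical AM--GM inequality $a+b\ge 2\sqrt{ab}$ applied to $a=M_1^2\phi(y,y)$ and $b=M_2^2\phi(x,x)$. Chaining the three displayed estimates gives the desired inequality.

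The only real obstacle, which is quite mild, is justifying Cauchy--Schwarz for a form $\phi$ that is merely positive and not assumed definite. This is standard: expanding $\phi(x+\lambda y,x+\lambda y)\ge 0$ in a suitable complex parameter $\lambda$ (e.g.\ $\lambda=-\overline{\phi(x,y)}/\phi(y,y)$ when $\phi(y,y)>0$, and handling the degenerate case $\phi(y,y)=0$ separately) yields $|\phi(x,y)|^2\le \phi(x,x)\phi(y,y)$. Once this bound is available the rest of the argument is purely arithmetical, and the role of the function $L$ is confined to the single step $L(\phi(x,y))\le|\phi(x,y)|$.
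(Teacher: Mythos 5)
Your proof is correct, and it follows the same overall skeleton as the paper's: reduce to the case where both $\phi(x,x)\le M_1^2$ and $\phi(y,y)\le M_2^2$ hold, apply the Cauchy--Schwarz inequality for positive sesquilinear forms to get $L(\phi(x,y))\le|\phi(x,y)|\le\sqrt{\phi(x,x)\phi(y,y)}\le M_1M_2$, and then finish with an elementary scalar inequality. The difference lies in the finishing step. The paper continues the chain of squared quantities through the arithmetic mean, writing $\bigl(M_1M_2-\sqrt{ab}\bigr)^2\ge\bigl(M_1M_2-\tfrac{a+b}{2}\bigr)^2\ge\bigl(\tfrac{M_1^2+M_2^2-a-b}{2}\bigr)^2\ge(M_1^2-a)(M_2^2-b)$, which is delicate because squaring is not monotone when the intermediate quantities (such as $M_1M_2-\tfrac{a+b}{2}$) may be negative. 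You instead expand $\bigl(M_1M_2-\sqrt{ab}\bigr)^2-(M_1^2-a)(M_2^2-b)$ directly and observe that it reduces to the single AM--GM inequality $M_1^2b+M_2^2a\ge 2M_1M_2\sqrt{ab}$; this is cleaner and avoids the sign issues entirely. You also make explicit why the ``or'' hypothesis suffices (the right-hand side is nonpositive when only one bound holds), a point the paper passes over with ``we may assume both hold.'' Your treatment of Cauchy--Schwarz for a merely positive (not definite) form is the standard one and is fine.
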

\begin{proof}
We may assume that both $\phi(x,x) \leq M_1^2$ and $\phi(y,y) \leq M_2^2$ hold. Then the Cauchy--Schwarz inequality implies that $|\phi(x,y)| \leq M_1M_2$. We have
\begin{align*}
&\left(M_1M_2-L(\phi(x,y))\right)^2 \geq \left(M_1M_2-|\phi(x,y)|\right)^2 \\
&\qquad\qquad\geq \left(M_1M_2- \sqrt{\phi(x,x)\phi(y,y)}\right)^2\qquad\qquad (\mbox{Cauchy--Schwarz inequality})\\
&\qquad\qquad\geq \left(M_1M_2- \frac{\phi(x,x)+\phi(y,y)}{2}\right)^2\quad (\mbox{arithmetic-geometric mean ineq.})\\
&\qquad\qquad\geq \left(\frac{M_1^2+M_2^2-\phi(x,x)-\phi(y,y)}{2}\right)^2\\
&\qquad\qquad\geq \left(M_1^2-\phi(x,x)\right)\left(M_2^2-\phi(y,y)\right)\quad(\mbox{arithmetic-geometric mean ineq.})
\end{align*}
as desired.
\end{proof}

%------------------------------------------------------------------------------%

The next result is a consequence of Theorem \ref{th2}, but we present a different proof for it.
\begin{theorem}
Suppose that $\Phi: \mathscr{A} \to \mathscr{B}$ is a unital positive linear map between unital $C^*$-algebras of operators acting on a Hilbert space $\mathscr{H}$, $A, B \in \mathscr{A}$ such that $\Phi(A^*A)$ or $\Phi(B^*B)$ is a contraction. Then
$$\big(1-\langle \Phi(B^*A)\xi,\xi\rangle\big)^2 \geq \big(1-\langle \Phi(A^*A)\xi,\xi\rangle\big)\big(1-\langle \Phi(B^*B)\xi,\xi\rangle\big)$$
for all unit vectors $\xi \in \mathscr{H}$.
\end{theorem}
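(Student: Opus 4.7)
The plan is to derive the estimate directly from the Cauchy--Schwarz inequality for a positive linear functional together with the scalar arithmetic--geometric mean inequality, without routing through the sesquilinear form formalism of Theorem \ref{th2}.

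Since $\Phi$ is a positive linear map and $\xi$ is a unit vector, $\omega(a):=\langle \Phi(a)\xi,\xi\rangle$ defines a positive linear functional on $\mathscr{A}$. Any such functional satisfies the Cauchy--Schwarz inequality $|\omega(B^*A)|^2\leq \omega(A^*A)\,\omega(B^*B)$, which will supply the key bound on the cross term. Writing $\alpha:=\langle \Phi(A^*A)\xi,\xi\rangle$, $\beta:=\langle \Phi(B^*B)\xi,\xi\rangle$, and $\gamma:=\langle \Phi(B^*A)\xi,\xi\rangle$, this amounts to $|\gamma|^2\leq\alpha\beta$.

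Next I would reduce to the case where both $\alpha\leq 1$ and $\beta\leq 1$: the hypothesis guarantees one of these, say $\alpha\leq 1$, and if $\beta>1$ then $(1-\alpha)(1-\beta)\leq 0$ so the inequality is trivial since the left--hand side is a square. In the reduced case $|\gamma|\leq\sqrt{\alpha\beta}\leq 1$, and the target inequality follows from the chain
$$|1-\gamma|^2 \;\geq\; (1-|\gamma|)^2 \;\geq\; \bigl(1-\sqrt{\alpha\beta}\bigr)^2 \;\geq\; \Bigl(1-\tfrac{\alpha+\beta}{2}\Bigr)^2 \;\geq\; (1-\alpha)(1-\beta),$$
where the last two inequalities are the scalar AM--GM inequality applied first to $\sqrt{\alpha\beta}$ and then to the non--negative numbers $1-\alpha$ and $1-\beta$.

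The only delicate point, and the main place where care is required, is the complex--valued nature of $\gamma$: the square $(1-\gamma)^2$ appearing in the statement must be read as $|1-\gamma|^2$, since otherwise the comparison with the real right--hand side is not well posed; the inequality $|1-\gamma|\geq 1-|\gamma|$ that feeds the chain above needs $|\gamma|\leq 1$, which is precisely what the WLOG reduction to $\alpha,\beta\leq 1$ secures. Every remaining step is an elementary scalar manipulation, so I do not anticipate any genuine obstacle.
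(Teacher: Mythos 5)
Your proof is correct, but it follows a genuinely different route from the one the paper gives for this theorem. The paper argues via a discriminant: it forms the quadratic $P(t)=(1-\langle \Phi(A^*A)\xi,\xi\rangle)t^2-2(1-\langle \Phi(B^*A)\xi,\xi\rangle)t+(1-\langle \Phi(B^*B)\xi,\xi\rangle)$, uses the Cauchy--Schwarz inequality for the form $(A,B)\mapsto\langle \Phi(B^*A)\xi,\xi\rangle$ to get $P(1)\leq 0$, notes that $P(t)\to\infty$ as $t\to\infty$, concludes that $P$ has a real root, and reads off the claimed inequality as nonnegativity of the discriminant. Your chain $|1-\gamma|^2\geq(1-|\gamma|)^2\geq(1-\sqrt{\alpha\beta})^2\geq\bigl(1-\tfrac{\alpha+\beta}{2}\bigr)^2\geq(1-\alpha)(1-\beta)$ is instead exactly the argument by which the paper proves Theorem \ref{th2}, of which the present theorem is explicitly said to be a consequence (the paper states that it is presenting ``a different proof'' here, namely the discriminant one). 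Comparing the two: the discriminant proof needs no case distinction on whether $\beta\leq 1$ (the root argument works whenever the leading coefficient is positive, the degenerate case $\alpha=1$ being trivial), whereas your proof requires the reduction to $\alpha,\beta\leq 1$; on the other hand your proof is entirely elementary and, importantly, is honest about the fact that $\gamma=\langle\Phi(B^*A)\xi,\xi\rangle$ need not be real --- the paper's assertion $P(1)\leq 0$ as written should involve $2\,\mathrm{Re}\,\gamma$ rather than $2\gamma$, and the left-hand side of the statement only compares sensibly with the real right-hand side if $(1-\gamma)^2$ is read as $|1-\gamma|^2$ (or $\gamma$ is replaced by its real part). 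Your explicit handling of that point, and the WLOG reduction that secures $|\gamma|\leq 1$ before invoking $|1-\gamma|\geq 1-|\gamma|$, patch small gaps that the published argument glosses over.
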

\begin{proof} Without loss of generality, assume that $\Phi(A^*A)$ is a contraction. Let $\xi \in \mathscr{H}$ be a unit vector. Hence $\langle \Phi(A^*A)\xi,\xi\rangle \leq 1$. Let us consider the quadratic polynomial $$P(t)=(1-\langle \Phi(A^*A)\xi,\xi\rangle)t^2-2(1-\langle \Phi(B^*A)\xi,\xi\rangle)t+(1-\langle \Phi(B^*B)\xi,\xi\rangle)\,,$$
where $t\in\mathbb{R}$. It follows from the Cauchy--Schwarz inequality applied to the sesquilinear form $\langle A,B\rangle=\langle \Phi(B^*A)\xi,\xi\rangle$ that
$$P(1)=-\langle \Phi(A^*A)\xi,\xi\rangle + 2\langle \Phi(B^*A)\xi,\xi\rangle-\langle \Phi(B^*B)\xi,\xi\rangle\leq 0\,.$$
Clearly, $\lim_{t \to \infty} P(t)=\infty$. Hence the equation $P(t)=0$ has a root in $\mathbb{R}$. Thus
$$\big(1-\langle \Phi(B^*A)\xi,\xi\rangle\big)^2 - \big(1-\langle \Phi(A^*A)\xi,\xi\rangle\big)\big(1-\langle \Phi(B^*B)\xi,\xi\rangle\big)\geq 0$$
as desired.
\end{proof}
%------------------------------------------------------------------------------%
The next result is immediately deduced from Theorem \ref{th2}.
\begin{corollary}\label{cor1}
Let $\psi$ be a positive linear functional on a $C^*$-algebra $\mathscr{A}$, let $A, B \in \mathscr{A}$ such that $\psi(A^*A) \leq M_1^2$ or $\psi(B^*B) \leq M_2^2$ for some positive numbers $M_1, M_2$ and let $L: \mathbb{C} \to \mathbb{R}$ be a function fulfilling $L(z)\leq |z|$ for all $z\in \mathbb{C}$. Then
\begin{eqnarray}\label{psi}
\big(M_1M_2-L(\psi(B^*A))\big)^2 \geq \big(M_1^2-\psi(A^*A)\big)\big(M_2^2-\psi(B^*B)\big)\,.
\end{eqnarray}
\end{corollary}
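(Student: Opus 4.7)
The plan is to apply Theorem \ref{th2} to the sesquilinear form on $\mathscr{A}$ canonically induced by $\psi$. Specifically, define $\phi : \mathscr{A} \times \mathscr{A} \to \mathbb{C}$ by $\phi(X, Y) := \psi(Y^* X)$. A short check shows that this is sesquilinear in the convention of the previous theorem (linear in the first slot and conjugate-linear in the second, using $(Y^*X)^* = X^*Y$ together with the fact that a positive linear functional on a $C^*$-algebra is automatically hermitian), while positivity of $\phi(X,X) = \psi(X^*X)$ is immediate from $X^*X \geq 0$ and the positivity of $\psi$.

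With this identification in hand, the hypothesis $\psi(A^*A) \leq M_1^2$ or $\psi(B^*B) \leq M_2^2$ translates verbatim to $\phi(A,A) \leq M_1^2$ or $\phi(B,B) \leq M_2^2$, and the cross-term is $\phi(A,B) = \psi(B^*A)$. Applying Theorem \ref{th2} with $x = A$, $y = B$, the same function $L : \mathbb{C} \to \mathbb{R}$, and the same constants $M_1, M_2$, the conclusion \eqref{psi} drops out directly.

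There is no real obstacle: once the sesquilinear form is identified, the deduction is pure bookkeeping, which is precisely why the author calls the corollary an immediate consequence. The only point worth pausing on is the verification of the positive-sesquilinear structure of $\phi$ in the first paragraph, but this is a routine GNS-style construction requiring no hypotheses beyond the linearity and positivity of $\psi$.
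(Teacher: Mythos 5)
Your proposal is correct and is exactly the deduction the paper intends: the paper offers no proof beyond declaring the corollary immediate from Theorem \ref{th2}, and indeed later uses the very same form $\phi(C,D)=\psi(D^*C)$ explicitly. Your instantiation with $x=A$, $y=B$ and the verification that $\phi$ is a positive sesquilinear form is the right (and only) bookkeeping needed.
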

%------------------------------------------------------------------------------%

\begin{corollary}[Acz\'el's Inequality] \label{c3} If $a_i, b_i\,\,(1\leq i\leq n)$ are positive numbers such that $\sum_{i=2}^na_i^2<1$ or $\sum_{i=2}^nb_i^2<1$, then
$$\left(1-\sum_{i=1}^na_ib_i\right)^2 \geq \left(1-\sum_{i=1}^na_i^2\right)\left(1-\sum_{i=1}^nb_i^2\right)\,.$$
\end{corollary}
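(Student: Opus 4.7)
The plan is to apply Corollary \ref{cor1} in its simplest special case: the commutative finite-dimensional $C^*$-algebra $\mathscr{A} = \mathbb{C}^n$, equipped with coordinatewise multiplication and the involution given by complex conjugation. In this setting every operator-theoretic ingredient of Corollary \ref{cor1} has a transparent scalar interpretation, and the Acz\'el inequality should drop out by direct specialization.

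For the data, I would take $A = (a_1, \ldots, a_n)$ and $B = (b_1, \ldots, b_n)$ in $\mathscr{A}$, so that $A^*A = (a_1^2, \ldots, a_n^2)$, $B^*B = (b_1^2, \ldots, b_n^2)$, and $B^*A = (a_1b_1, \ldots, a_nb_n)$. I would then let $\psi \colon \mathbb{C}^n \to \mathbb{C}$ be the sum functional $\psi(x_1, \ldots, x_n) = \sum_{i=1}^n x_i$, whose positivity is immediate because a positive element of $\mathbb{C}^n$ has nonnegative entries. Under this choice $\psi(A^*A) = \sum_{i} a_i^2$, $\psi(B^*B) = \sum_{i} b_i^2$, and $\psi(B^*A) = \sum_{i} a_i b_i$, and the hypothesis of the corollary (after adjusting the summation index so that one of $\sum a_i^2$ or $\sum b_i^2$ is at most $1$) becomes exactly the contraction condition $\psi(A^*A) \leq 1$ or $\psi(B^*B) \leq 1$ required by Corollary \ref{cor1} with $M_1 = M_2 = 1$.

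To close the argument I would take $L \colon \mathbb{C} \to \mathbb{R}$ to be $L(z) = \operatorname{Re}(z)$, which trivially obeys $L(z) \leq |z|$; since $\psi(B^*A)$ is a nonnegative real number, $L$ acts as the identity on it. Substituting into inequality \eqref{psi} yields the desired estimate verbatim. There is essentially no obstacle here beyond book-keeping: the analytical work has been done in Theorem \ref{th2} and packaged into Corollary \ref{cor1}, and the only thing to verify is that the summation ranges on the two sides of the stated inequality are compatible with the normalization being used, so that the scalar Acz\'el form is recovered and not its trivial Cauchy--Schwarz shadow.
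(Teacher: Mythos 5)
Your proposal is correct and is essentially the paper's own proof: the paper applies Corollary \ref{cor1} to the diagonal matrices $A=\mathrm{diag}(a_1,\dots,a_n)$, $B=\mathrm{diag}(b_1,\dots,b_n)$ with the trace functional and $L(z)=|z|$, which under the obvious $*$-isomorphism is exactly your choice of $\mathbb{C}^n$ with the sum functional (your $L(z)=\operatorname{Re}(z)$ works equally well since $\psi(B^*A)\geq 0$). Your parenthetical remark about reconciling the summation ranges is apt, as the same index adjustment is implicitly needed in the paper's version.
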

Apply Corollary \ref{cor1} to the $n \times n$ matrices $A=\left[\begin{array}{cccc}a_1&&0\\ & \ddots &\\ 0&&a_n\end{array}\right]$ and $B=\left[\begin{array}{cccc}b_1&&0\\ & \ddots &\\ 0&&b_n\end{array}\right]$, positive linear functional ${\rm tr}(\cdot)$ on $M_n(\mathbb{C})$ and $L(z)=|z|$.\\
%------------------------------------------------------------------------------%

If we assume that $A$ and $B$ are normal contractions of a unital $C^*$-algebra $\mathscr{A}$, $AB=BA$ and consider the positive sesquilinear form $\phi(C,D)=\psi(D^*C)\,\,(C,D \in\mathscr{A})$, where $\psi$ is a pure state (or, equivalently, a non-zero complex homomorphism) on the commutative $C^*$-algebra generated by three elements $A, B$ and the identity $I$ of $\mathscr{A}$, then we get from \eqref{psi} that
\begin{eqnarray*}
\big(1-\mbox{Re}(\psi(B^*A))\big)^2 \geq \big(1-\psi(A^*A)\big)\big(1-\psi(B^*B)\big)\,,
\end{eqnarray*}
in which $\mbox{Re}$ denotes the real part. Hence
\begin{eqnarray*}
\psi\big(1-\mbox{Re}(B^*A)\big)^2 \geq \psi\big((1-A^*A)(1-B^*B)\big)\,,
\end{eqnarray*}
whence
\begin{eqnarray*}
\big(1-\mbox{Re}(B^*A)\big)^2 \geq (1-A^*A)(1-B^*B)\,.
\end{eqnarray*}
The same assertion is valid with the imaginary part $\mbox{Im}$ instead of $\mbox{Re}$. We proved therefore the following result.
\begin{corollary}
Let $A, B$ be commuting normal contractions of a unital $C^*$-algebra $\mathscr{A}$. Then
\begin{eqnarray*}
\big(1-\mbox{Re}(B^*A)\big)^2 \geq (1-A^*A)(1-B^*B)
\end{eqnarray*}
and
\begin{eqnarray*}
\big(1-\mbox{Im}(B^*A)\big)^2 \geq (1-A^*A)(1-B^*B)\,.
\end{eqnarray*}
\end{corollary}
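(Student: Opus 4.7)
The plan is to reduce the desired operator inequality to a scalar inequality by passing through the Gelfand representation of the unital commutative $C^*$-subalgebra $\mathscr{C}$ generated by $A$, $B$, and $I$. Since $A$ and $B$ are commuting normal elements, the $*$-algebra they generate together with $I$ is abelian, and so its norm-closure $\mathscr{C}$ is a commutative unital $C^*$-algebra. By the Gelfand--Naimark theorem, $\mathscr{C} \cong C(\Omega)$ for some compact Hausdorff space $\Omega$, and the pure states of $\mathscr{C}$ coincide with its characters, i.e.\ the non-zero multiplicative linear functionals (equivalently, point evaluations on $\Omega$).

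Fix an arbitrary pure state $\psi$ of $\mathscr{C}$. First I would apply Corollary \ref{cor1} to $\psi$ with $M_1 = M_2 = 1$, which is legitimate because $A$ and $B$ are contractions, so $A^*A \leq I$ and $B^*B \leq I$ give $\psi(A^*A), \psi(B^*B) \leq 1$. Choosing $L(z) = \mbox{Re}(z)$, which trivially satisfies $L(z) \leq |z|$, yields the scalar inequality
$$\bigl(1 - \mbox{Re}(\psi(B^*A))\bigr)^2 \geq \bigl(1 - \psi(A^*A)\bigr)\bigl(1 - \psi(B^*B)\bigr).$$
Next I would exploit the multiplicativity and $*$-preservation of $\psi$ (a character satisfies $\psi(\mbox{Re}(X)) = \mbox{Re}(\psi(X))$ and $\psi(XY) = \psi(X)\psi(Y)$), to recognize both sides as values of $\psi$ on elements of $\mathscr{C}$:
$$\psi\bigl((1 - \mbox{Re}(B^*A))^2\bigr) \geq \psi\bigl((1 - A^*A)(1 - B^*B)\bigr).$$
Because this inequality holds for every character of $\mathscr{C}$, and because a self-adjoint element of a commutative $C^*$-algebra is positive if and only if every character evaluates non-negatively on it (its Gelfand transform is non-negative on $\Omega$), the operator inequality $(1 - \mbox{Re}(B^*A))^2 \geq (1 - A^*A)(1 - B^*B)$ follows.

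For the imaginary-part version I would rerun the same argument verbatim with $L(z) = \mbox{Im}(z)$ in place of $L(z) = \mbox{Re}(z)$; this choice is equally admissible in Corollary \ref{cor1} since $\mbox{Im}(z) \leq |z|$. The only conceptual step is the reduction to the commutative subalgebra $\mathscr{C}$, where the hypothesis that $A$ and $B$ are commuting normal elements is used in an essential way; once that reduction is made, lifting the family of scalar inequalities back to an operator inequality via Gelfand characters is routine and I do not anticipate a serious obstacle.
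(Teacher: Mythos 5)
Your proposal is correct and follows essentially the same route as the paper: the paper also applies Corollary \ref{cor1} with $M_1=M_2=1$ and $L=\mbox{Re}$ (resp.\ $L=\mbox{Im}$) to a pure state $\psi$ on the commutative $C^*$-algebra generated by $A$, $B$ and $I$, uses the multiplicativity of $\psi$ to rewrite both sides as $\psi$ of elements of that algebra, and concludes from the fact that characters detect positivity of self-adjoint elements in a commutative $C^*$-algebra. Your write-up merely makes explicit the Gelfand-theoretic details that the paper leaves implicit.
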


\begin{corollary}
Let $\psi$ be a positive linear functional on $M_n(\mathbb{C})$, let $A, B \in M_n(\mathbb{C})$ such that $\psi(A)\leq M_1^2$ or $\psi(B) \leq M_2^2$ for some positive numbers $M_1, M_2$. Then
\begin{eqnarray*}
\big(M_1M_2-\psi(A\sharp B))\big)^2 \geq \big(M_1^2-\psi(A)\big)\big(M_2^2-\psi(B)\big)\,.
\end{eqnarray*}
\end{corollary}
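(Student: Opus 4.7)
The plan is to mirror the proof of Theorem \ref{th2} almost verbatim, once we have established the right \emph{Cauchy--Schwarz type} substitute for $\psi$ and $\sharp$. Explicitly, the key auxiliary inequality I will aim to prove first is
\[
\psi(A\sharp B)^{2} \;\leq\; \psi(A)\,\psi(B)
\]
for any positive $A,B \in M_n(\mathbb{C})$ and any positive linear functional $\psi$. Note that this statement only makes literal sense when $A$ and $B$ are positive (so that $A\sharp B$ is defined and self-adjoint), so I will assume, as is implicit in the statement, that $A, B \geq 0$, and further that they are invertible (the non-invertible case then follows by a standard continuity/approximation argument, replacing $A$ by $A+\varepsilon I$ and letting $\varepsilon \downarrow 0$).

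To establish the auxiliary inequality, I will use the well-known variational characterization of the geometric mean: $A\sharp B$ is self-adjoint and the $2\times 2$ operator block matrix
\[
\begin{pmatrix} A & A\sharp B \\ A\sharp B & B \end{pmatrix}
\]
is positive in $M_2(M_n(\mathbb{C}))$. Since a positive linear functional on a $C^*$-algebra is automatically completely positive (hence, in particular, $2$-positive), applying $\psi$ entrywise yields
\[
\begin{pmatrix} \psi(A) & \psi(A\sharp B) \\ \psi(A\sharp B) & \psi(B) \end{pmatrix} \;\geq\; 0
\]
in $M_2(\mathbb{C})$, and taking the determinant gives the desired $\psi(A\sharp B)^2 \leq \psi(A)\psi(B)$.

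With this in hand, the rest of the argument is essentially the chain of inequalities from Theorem \ref{th2}. Without loss of generality assume $\psi(A) \leq M_1^2$ and $\psi(B) \leq M_2^2$ (if only one holds, the product on the right-hand side is non-positive and the inequality is trivial). Then
\begin{align*}
\bigl(M_1M_2 - \psi(A\sharp B)\bigr)^{2}
&\geq \Bigl(M_1M_2 - \sqrt{\psi(A)\,\psi(B)}\Bigr)^{2} \\
&\geq \left(M_1M_2 - \frac{\psi(A)+\psi(B)}{2}\right)^{2} \\
&\geq \left(\frac{(M_1^2 - \psi(A))+(M_2^2 - \psi(B))}{2}\right)^{2} \\
&\geq \bigl(M_1^2 - \psi(A)\bigr)\bigl(M_2^2 - \psi(B)\bigr),
\end{align*}
where the first step uses the auxiliary Cauchy--Schwarz inequality (together with $M_1M_2 - \sqrt{\psi(A)\psi(B)} \geq 0$, which follows from the hypothesis), the second and fourth use the AM--GM inequality, and the third uses $M_1^2 + M_2^2 \geq 2M_1M_2$.

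The main obstacle is the auxiliary Cauchy--Schwarz step; everything after that is a cosmetic adaptation of the proof of Theorem \ref{th2}. The $2\times 2$ block-positivity characterization of $A\sharp B$ combined with the automatic $2$-positivity of positive functionals (a standard fact, e.g.\ via GNS) is precisely the tool that makes this step work cleanly, so the writeup should state these facts explicitly for clarity.
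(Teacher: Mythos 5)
Your key auxiliary inequality $\psi(A\sharp B)^2\le\psi(A)\psi(B)$ is correct, and you reach it by a genuinely different route from the paper. The paper factors $A\sharp B=D^*C$ with $D=A^{1/2}$ and $C=(A^{-1/2}BA^{-1/2})^{1/2}A^{1/2}$, so that $D^*D=A$ and $C^*C=B$, and then feeds these two elements into the Cauchy--Schwarz-type inequality \eqref{psi} of Corollary \ref{cor1}, using the representation $\psi(\cdot)=\langle\,\cdot\,Z,Z\rangle$ with respect to the Hilbert--Schmidt inner product; you instead invoke the block-matrix characterization $\left(\begin{smallmatrix}A& A\sharp B\\ A\sharp B& B\end{smallmatrix}\right)\ge 0$ together with the $2$-positivity of positive linear functionals and take a determinant. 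Both are legitimate; yours avoids the explicit factorization (hence the $A^{-1/2}$, which makes your $\varepsilon$-approximation remark for singular $A$ cleaner), while the paper's has the advantage of reducing the corollary entirely to its already-proved Corollary \ref{cor1}, so that no new scalar estimate is needed.

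That said, the scalar chain you then run --- which you copied from the proof of Theorem \ref{th2}, and which is exactly how the paper argues there --- contains two intermediate steps that are false in general. Writing $t:=\frac{\psi(A)+\psi(B)}{2}$, you pass from $\bigl(M_1M_2-\sqrt{\psi(A)\psi(B)}\bigr)^2$ to $(M_1M_2-t)^2$ using AM--GM inside the bracket; but from $0\le M_1M_2-\sqrt{\psi(A)\psi(B)}$ and $M_1M_2-\sqrt{\psi(A)\psi(B)}\ge M_1M_2-t$ one may not conclude the same inequality for the squares unless $M_1M_2-t\ge0$, which the hypotheses do not guarantee. Concretely, take $n=1$, $\psi=\mathrm{id}$, $A=4$, $B=\tfrac14$, $M_1=2$, $M_2=\tfrac12$: then $\bigl(M_1M_2-\sqrt{\psi(A)\psi(B)}\bigr)^2=0$ while $(M_1M_2-t)^2=(1-2.125)^2>1$. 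The subsequent step, which replaces $M_1M_2$ by $\frac{M_1^2+M_2^2}{2}$ inside a square, fails for the same reason. The repair is to delete both middle steps and go directly: expanding, $\bigl(M_1M_2-\sqrt{\psi(A)\psi(B)}\bigr)^2\ge\bigl(M_1^2-\psi(A)\bigr)\bigl(M_2^2-\psi(B)\bigr)$ is equivalent to $M_1^2\psi(B)+M_2^2\psi(A)\ge 2M_1M_2\sqrt{\psi(A)\psi(B)}$, which is precisely AM--GM applied to the nonnegative numbers $M_1^2\psi(B)$ and $M_2^2\psi(A)$. Since this defect is inherited from the paper's own proof of Theorem \ref{th2}, your argument is no worse than the original, but the gap is real and you should patch it as indicated.
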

\begin{proof}
We may assume that $\psi(A)\leq M_1^2$ and $\psi(B) \leq M_2^2$. The positive linear functional $\psi$ on $M_n(\mathbb{C})$ can be characterized by $\psi(C)=\langle CZ,Z\rangle$, where $Z\geq 0$ and $\langle \cdot,\cdot\rangle$ denotes the canonical inner product on $M_n(\mathbb{C})$ defined by $\langle X,Y\rangle={\rm tr}(Y^*X)$. It follows from \eqref{psi} with $L(z)=|z|$ and elements $A^{1/2}$ and
$(A^{-1/2}BA^{-1/2})^{1/2}A^{1/2}$ that
\begin{eqnarray*}
&\big(M_1M_2-\langle (A^{-1/2}BA^{-1/2})^{1/2}A^{1/2}Z, A^{1/2}Z\rangle\big)^2 \\
&\qquad \qquad\geq \big(M_1^2-\langle (A^{-1/2}BA^{-1/2})^{1/2}A^{1/2}Z, (A^{-1/2}BA^{-1/2})^{1/2}A^{1/2}Z\rangle\big)\\
&\qquad\qquad \times \big(M_2^2-\langle A^{1/2}Z, A^{1/2}Z\rangle\big)\,,
\end{eqnarray*}
or equivalently
\begin{eqnarray*}
&\big(M_1M_2-\langle A^{1/2}(A^{-1/2}BA^{-1/2})^{1/2}A^{1/2}Z, Z\rangle\big)^2 \\
&\qquad\qquad \geq \big(M_1^2-\langle A^{1/2}(A^{-1/2}BA^{-1/2})^{1/2}(A^{-1/2}BA^{-1/2})^{1/2}A^{1/2}Z, Z\rangle\big)\\
& \qquad\qquad\qquad\times \big(M_2^2-\langle A^{1/2}A^{1/2}Z, Z\rangle\big)\,,
\end{eqnarray*}
whence
\begin{eqnarray*}
\big(M_1M_2-\psi(A\sharp B))\big)^2 \geq \big(M_1^2-\psi(A)\big)\big(M_2^2-\psi(B)\big)\,.
\end{eqnarray*}
Note that $0\leq \psi(A\sharp B)=|\psi(A\sharp B)|$.
\end{proof}

Finally, we present an Acz\'el type inequality involving unitarily invariant norms. Note that
\begin{eqnarray}\label{uni1}
|||AXB|||\leq \|A\|\,|||X|||\,\|B\|
\end{eqnarray}
for all $X, A, B$. The arithmetic-geometric mean inequality states that
\begin{eqnarray}\label{uni2}
|||A^*XB||| \leq \frac{1}{2} |||AA^*X+XBB^*|||\,.
\end{eqnarray}
\begin{proposition}
Let $|||\cdot|||$ be a unitarily invariant norm on $M_n(\mathbb{C})$ and let $X, A, B \in M_n(\mathbb{C})$ such that $\|A\|^2\,|||X||| \leq 1$ or  $\|B\|^2\,|||X||| \leq 1$. Then
\begin{eqnarray*}
(1-|||A^*XB|||)^2 \geq (1-\|A\|^2\,|||X|||)(1-\|B\|^2\,|||X|||)
\end{eqnarray*}
\end{proposition}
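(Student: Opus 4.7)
The plan is to mirror the chain of inequalities used to prove Theorem \ref{th2}, with \eqref{uni1} and \eqref{uni2} playing the roles of the Cauchy--Schwarz and arithmetic--geometric mean steps. Specifically, I want to bound $|||A^*XB|||$ above by the arithmetic mean $\tfrac{1}{2}(\|A\|^2\,|||X|||+\|B\|^2\,|||X|||)$, subtract from $1$, and finish with the elementary inequality $\bigl(\tfrac{a+b}{2}\bigr)^2\ge ab$.

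First I would dispose of the degenerate case: if exactly one of $\|A\|^2\,|||X|||\le 1$ or $\|B\|^2\,|||X|||\le 1$ holds, then the right-hand side of the asserted inequality is nonpositive while the left-hand side is a square, so the inequality is automatic. Assume henceforth that both bounds hold. Applying \eqref{uni2} gives $|||A^*XB|||\le\tfrac{1}{2}|||AA^*X+XBB^*|||$; the triangle inequality then bounds the right side by $\tfrac{1}{2}(|||AA^*X|||+|||XBB^*|||)$, and two applications of \eqref{uni1} (using the $C^*$-identity $\|AA^*\|=\|A\|^2$ and $\|BB^*\|=\|B\|^2$) yield
\[
|||A^*XB|||\;\le\;\frac{\|A\|^2\,|||X|||+\|B\|^2\,|||X|||}{2}.
\]
Rearranging this as
\[
1-|||A^*XB|||\;\ge\;\frac{(1-\|A\|^2\,|||X|||)+(1-\|B\|^2\,|||X|||)}{2}\;\ge\;0
\]
leaves both sides manifestly nonnegative under the standing hypotheses.

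Since both sides are nonnegative, squaring preserves the inequality, and a final application of $\bigl(\tfrac{a+b}{2}\bigr)^2\ge ab$ with $a=1-\|A\|^2\,|||X|||$ and $b=1-\|B\|^2\,|||X|||$ delivers the desired conclusion. The only delicate point is guaranteeing nonnegativity of both sides before squaring (since $t\mapsto t^2$ is monotone only on $[0,\infty)$); this is precisely what forces the initial case split. Beyond that, the argument is a routine translation of the proof of Theorem \ref{th2} into the unitarily invariant norm setting, so I do not anticipate any substantive obstacle.
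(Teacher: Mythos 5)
Your proof is correct and follows essentially the same route as the paper's: inequality \eqref{uni2}, the triangle inequality, \eqref{uni1}, and the elementary bound $\bigl(\tfrac{a+b}{2}\bigr)^2\ge ab$, with the only cosmetic difference being that you apply \eqref{uni1} to the arithmetic mean before squaring whereas the paper applies it to the product at the end. Your explicit case split on the ``or'' hypothesis and your check that both sides are nonnegative before squaring are points the paper's displayed chain glosses over, so they are worth keeping.
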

\begin{proof}
\begin{align*}
&\quad\qquad\qquad(1-|||A^*XB|||)^2 \geq \left(1-\frac{1}{2}\left|||AA^*X+XBB^*\right|||\right)^2\quad\quad (\mbox{by~} \eqref{uni2})\\
&\quad\quad\geq \frac{(1-|||AA^*X|||)+(1-|||XBB^*|||)}{2}\qquad\qquad\quad\qquad\qquad(\mbox{triangle Ineq.})\\
&\quad\quad\geq (1-|||AA^*X|||)(1-|||XBB^*|||)\quad\quad(\mbox{arithmetic-geometric mean ineq.})\\
&\quad\quad\geq(1-\|A\|^2\,|||X|||)(1-\|B\|^2\,|||X|||)\qquad\qquad\qquad\qquad\qquad \quad(\mbox{by~} \eqref{uni1})\,.
\end{align*}
\end{proof}

%------------------------------------------------------------------------------%

\bibliographystyle{amsplain}

\end{document}